\documentclass{amsart}
\pdfoutput=1
\usepackage{amssymb}
\usepackage{amsmath}
\usepackage{amsthm} 
\usepackage[colorlinks]{hyperref}

\usepackage[all,cmtip]{xy} 

\newtheorem{theorem}{Theorem}
\newtheorem{proposition}[theorem]{Proposition}
\newtheorem{question}[theorem]{Question}

\theoremstyle{remark}
\newtheorem{remark}[theorem]{Remark}


\DeclareMathOperator{\End}{End}

\newcommand{\BC}{{\mathbf{C}}}
\newcommand{\BQ}{{\mathbf{Q}}}
\newcommand{\BZ}{{\mathbf{Z}}}

\newcommand\lowtilde{\hbox{\lower0.7ex\hbox{\char`\~}}}

\renewcommand{\hat}{\widehat}

\begin{document}

\title[Surjections of Mordell--Weil groups]
{Optimal quotients and surjections of Mordell--Weil groups}

\author{Everett W. Howe}
\address{Center for Communications Research,
        4320 Westerra Court,
        San Diego, CA 92121-1967, USA.}
\email{however@alumni.caltech.edu}
\urladdr{\href{http://www.alumni.caltech.edu/~however/}
             {http://www.alumni.caltech.edu/\lowtilde{}however/}}

\date{6 February 2016}
\keywords{Jacobian, Mordell--Weil group, automorphism}

\subjclass[2010]{Primary 11G10; Secondary 11G05, 11G30, 11G35}

\begin{abstract}
Answering a question of Ed Schaefer, we show that if $J$ is the Jacobian of a
curve $C$ over a number field, if $s$ is an automorphism of $J$ coming from an 
automorphism of~$C$, and if $u$ lies in $\BZ[s]\subseteq\End J$ and has 
connected kernel, then it is not necessarily the case that $u$ gives a 
surjective map from the Mordell--Weil group of $J$ to the Mordell--Weil group
of its image.
\end{abstract}

\maketitle

\section{Introduction}
\label{S:intro}

Let $J$ be the Jacobian of a curve $C$ over a number field. If the automorphism
group $G$ of $J$ is nontrivial, one can use idempotents of the group algebra 
$\BQ[G]$ to decompose $J$ (up to isogeny) as a direct sum of abelian 
subvarieties.  This decomposition can be useful, for example, if one would like
to compute the rational points on~$C$, because one of the subvarieties may 
satisfy the conditions necessary for Chabauty's method even when $J$ itself
does not.

In this context, Ed Schaefer asked the following question in an online 
discussion:

\begin{question}
\label{Q:Ed}
Let $C$ be a curve over a number field~$k$, let $\sigma$ be a nontrivial
automorphism of~$C$, let $s$ be the associated automorphism of the Jacobian $J$
of~$C$, and let $u$ be an element of $\BZ[s]\subseteq \End J$. Let 
$A\subseteq J$ be the image of~$u$, and suppose the kernel of $u$ is connected.
Is it always true that map of Mordell--Weil groups $J(k)\to A(k)$ induced by
$u$ is surjective\textup{?}
\end{question}

An \emph{optimal quotient} of an abelian variety $A$ is a surjective morphism 
$A\to A'$ of abelian varieties whose kernel is connected
(see~\cite[\S{3}]{ConradStein2001}), so Schaefer's question asks whether an 
optimal quotient of a curve's Jacobian ``coming from'' an automorphism of the 
curve necessarily induces a surjection on Mordell--Weil groups.

The purpose of this paper is to show by explicit example that the answer to 
Schaefer's question is \emph{no}. In Section~\ref{S:double} we show that if 
$\varphi\colon C\to E$ is a degree-$2$ map from a genus-$2$ curve to an 
elliptic curve, and if $\sigma$ is the involution of $C$ that fixes~$E$, then
the endomorphism $1+s$ of $J$ has connected kernel and its image is isomorphic
to~$E$.  In fact, the map $J\to E$ determined by $1+s$ is isomorphic to the 
push-forward $\varphi_*\colon J\to E$.  To show that the answer to
Question~\ref{Q:Ed} is \emph{no}, it therefore suffices to find a double cover
$\varphi\colon C\to E$ of an elliptic curve by a genus-$2$ curve such that 
$\varphi_*$ is not surjective on Mordell--Weil groups.  We provide one such 
example in Section~\ref{S:example}, and show in Section~\ref{S:examples} that
there are in fact infinitely many examples.

\section{Genus-2 double covers of elliptic curves}
\label{S:double}

In this section we review some facts about genus-$2$ double covers of elliptic
curves over an arbitrary field of characteristic not~$2$.  In 
Section~\ref{S:example} we will return to the case where the base field is a 
number field.
 
The general theory of degree-$n$ maps from genus-$2$ curves to elliptic curves
is explained by Frey and Kani~\cite{FreyKani1991}. Over the complex numbers, 
the complete two-parameter family of genus-$2$ double covers of elliptic curves
was given in 1832 by Jacobi~(\cite[pp.~416--417]{Jacobi1832}, 
\cite[pp.~380--382]{Jacobi}) as a postscript to his review of Legendre's 
\emph{Trait\'e des fonctions elliptiques}~\cite{Legendre1828}; Legendre had 
himself given a one-parameter family of genus-$2$ double covers of elliptic 
curves (see Remark~\ref{R:Legendre}, below).  
In~\cite[\S{}3.2]{HoweLeprevostEtAl2000}, Jacobi's construction is modified so
that it works rationally over any base field of characteristic not~$2$, as
follows:

Let $k$ be an arbitrary field of characteristic not~$2$ and let $K$ be a
separable closure of~$k$. Suppose we are given equations $y^2 = f$ and
$y^2 = g$ for two elliptic curves $E$ and $F$ over~$k$, where $f$ and $g$ are
separable cubics in $k[x]$, and suppose further that we are given an
isomorphism $\psi\colon E[2](K)\to F[2](K)$ of Galois modules such that $\psi$
is not the restriction to $E[2]$ of a geometric isomorphism $E_K\to F_K$.  
Then~\cite[Proposition~4, p.~324]{HoweLeprevostEtAl2000} gives an explicit 
equation for a genus-$2$ curve $C/k$ such that the Jacobian $J$ of $C$ is 
isomorphic to the quotient of $E\times F$ by the graph $\Gamma$ of~$\psi$. (We 
say that $C$ is the curve obtained by \emph{gluing} $E$ and $F$ together along 
their $2$-torsion using~$\psi$.) Let $\omega$ be the quotient map from 
$E\times F$ to~$J$.  The construction from~\cite{HoweLeprevostEtAl2000} also 
shows that if $\lambda\colon J\to\hat{J}$ is the canonical principal 
polarization on~$J$, then there is a diagram
\begin{equation}
\label{eq-polarization}
\xymatrix{
E\times F\ar[rr]^{(2,2)}\ar[d]_{\omega} && E\times F\\ 
J\ar[rr]^\lambda                        && \hat{J}\ar[u]_{\hat{\omega}}.
}
\end{equation}
The automorphism $(1,-1)$ of $E\times F$ fixes $\Gamma$ and respects the product
polarization on $E\times F$, so it descends to give an automorphism $s$ of the 
polarized variety $(J,\lambda)$.  By Torelli's 
theorem~\cite[Theorem~12.1, p.~202]{Milne1986}, the automorphism $s$ comes from 
an automorphism $\sigma$ of~$C$.  Clearly $\sigma$ has order~$2$, and the 
quotient of $C$ by the order-$2$ group $\langle \sigma\rangle$ is isomorphic
to~$E$.  Let $\varphi\colon C\to E$ be the associated double cover.

Let $u = 1 + s\in \End J$.  Then we have a diagram
\begin{equation}
\label{eq-diagram}
\xymatrix{
E\times F\ar[rr]^{(2,0)}\ar[d]_{\omega} && E\times F\ar[d]^\omega\\ 
J\ar[rr]^{u}                            && J.
}
\end{equation}
We claim that the kernel of $u$ is connected. To see this, note that the kernel 
of $\omega\circ (2,0)$ is simply $E[2]\times F$. The image of $E[2]\times F$ in
$J$ (under the map~$\omega$) is equal to the image of $0\times F$ in $J$ because
every element of $E[2]\times 0$ is congruent modulo $\Gamma$ to an element of 
$0\times F[2]$. Also, since $\Gamma$ intersects $0\times F$ only in the 
identity, the image of $F$ in $J$ is isomorphic to~$F$, so the kernel of $u$ is 
isomorphic to~$F$.

On the other hand, we see from diagram~\eqref{eq-diagram} that the \emph{image}
of $u$ is equal to the image of $E\times 0$ in~$J$.  Since $\Gamma$ has trivial 
intersection with $E\times 0$, the image of $u$ is isomorphic to~$E$.  The 
induced map $J\to E$ is nothing other than $\varphi_*$.

Likewise, the involution $-s$ on $J$ corresponds to an involution $\sigma'$ 
of~$C$.  The quotient of $C$ by the group $\langle \sigma'\rangle$ is 
isomorphic to~$F$, and gives us a double cover $\varphi'\colon C\to F$.
If we set $v = 1 - s$, then $v$ has kernel isomorphic to $E$ and image 
isomorphic to $F$, and the map $J\to F$ induced by $v$ is $\varphi'_*$.

\begin{remark}
Frey and Kani prove a more general result: Given two elliptic curves $E$ and
$F$ over an algebraically closed field~$k$, an integer $n>1$, and an 
isomorphism $\psi\colon E[n]\to F[n]$ of group schemes that is an anti-isometry
with respect to the Weil pairings on $E[n]$ and $F[n]$, there is a 
possibly-singular curve $C$ over $k$ of arithmetic genus $2$ whose polarized 
Jacobian $(J,\lambda)$ fits into a diagram analogous to \eqref{eq-polarization},
but with the $2$'s on the top arrow replaced with~$n$'s.  The curve $C$ has 
degree-$n$ maps to both $E$ and~$F$, and arguments like the one given above 
show that the corresponding push-forward maps from $J$ to $E$ and from $J$ to 
$F$ are optimal.
\end{remark}

\begin{remark}
\label{R:Legendre}
Legendre's family of genus-$2$ curves with split Jacobians
\cite[Troi\-si\`eme Suppl\'ement, \S{}XII, pp.~333--359]{Legendre1828} is the 
family over $\BC$ obtained from the construction above by taking $F = E$
and by taking $\psi\colon E[2](\BC)\to E[2](\BC)$ so that it fixes one point of
order $2$ and swaps the other two.
\end{remark}

In Sections~\ref{S:example} and~\ref{S:examples}, we will use the construction
that we have just described to produce genus-$2$ curves with involutions that
we can use to show that the answer to Question~\ref{Q:Ed} is \emph{no}. As part 
of our analyses, we will need to know how to tell whether a point of
$(E\times F)(k)$ lies in the image of $J(k)$ under the map 
$(\varphi_*,\varphi'_*)\colon J\to E \times F$.  Such a criterion is given in
Proposition~12 (p.~338) of~\cite{HoweLeprevostEtAl2000}.  For the reader's
convenience, we review that criterion here.  We continue to use the notation
set earlier in the section: $E$ and $F$ are elliptic curves given by
equations $y^2 = f$ and $y^2 = g$, respectively; 
$\psi\colon E[2](K) \to F[2](K)$ is an isomorphism of Galois modules;
and $C$ is a genus-$2$ curve whose Jacobian $J$ is isomorphic the quotient of
$E\times F$ by the graph of $\psi$.  The curve $C$ comes provided with covering
maps $\varphi\colon C\to E$ and $\varphi'\colon C\to F$ of degree~$2$, and the 
quotient map $E\times F\to J$ followed by $(\varphi_*,\varphi'_*)$ is 
multiplication-by-$2$ on $E\times F$.

Let $L$ be the $3$-dimensional $k$-algebra $k[x]/(f)$ and let $X$ be the image
of $x$ in~$L$. Note that $L$ is a product of fields, one for each Galois orbit
of $2$-torsion points in~$E(K)$.  The norm from $L$ to $k$ induces a map from
$L^*/L^{*2}$ to $k^*/k^{*2}$ that we continue to call the norm, and we let 
$\hat{L}$ be the kernel of the norm $L^*/L^{*2}\to k^*/k^{*2}$.  There is a
homomorphism $\iota\colon E(k)/2E(k) \to \hat{L}$ defined as follows: If 
$P\in E(k)$ is a rational non-$2$-torsion point with $x$-coordinate~$x_P$, then
$\iota$ sends the class of $P$ modulo $2E(k)$ to the class of $x_P - X$
modulo $L^{*2}$.  If $P\in E(k)$ is a rational point of order~$2$, then 
$x_P - X$ is nonzero in each component of $L$ other than the one corresponding 
to $P$; the value of $\iota$ on the class of $P$ is then the unique element of
$\hat{L}$ that agrees with $x_P - X$ on the components where it is nonzero.

Similarly, we define a $k$-algebra $L' = k[x]/(g)$ and a homomorphism 
$\iota'$ from $E(k)/2E(k)$ to $\hat{L}'$.  We note that the map $\psi$ induces 
an isomorphism $\psi^*: \hat{L}' \to \hat{L}$.

\begin{proposition}
\label{P:2torsion}
A point $(P,Q)\in (E\times F)(k)$ lies in the image of $J(k)$ under the map 
$(\varphi_*,\varphi'_*)$ if and only if the isomorphism $\psi^*$ takes
$\iota'(Q)$ to $\iota(P)$.
\end{proposition}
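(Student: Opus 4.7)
The plan is to prove the criterion by descent theory, using the fact that $(\varphi_*,\varphi'_*)\circ\omega = (2,2)$ on $E\times F$. A point $(P,Q)\in (E\times F)(k)$ lies in the image of $J(k)$ under $(\varphi_*,\varphi'_*)$ exactly when it has a preimage, so I would first lift $(P,Q)$ to a geometric point $(P',Q')\in (E\times F)(K)$ with $2P'=P$ and $2Q'=Q$, and study when $\omega(P',Q')\in J(K)$ descends to $k$. Galois invariance of $\omega(P',Q')$ is equivalent to requiring that for every $\tau\in\mathrm{Gal}(K/k)$ the difference $(\tau P'-P',\,\tau Q'-Q')$ lie in the subgroup $\Gamma$, and both coordinates of this difference are automatically in the respective $2$-torsion because $P$ and $Q$ are $k$-rational. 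Since $\Gamma$ is the graph of $\psi$, this condition reads $\tau Q'-Q' = \psi(\tau P'-P')$ for all $\tau$; equivalently, the class of the cocycle $\tau\mapsto \tau Q'-Q'$ in $H^1(k,F[2])$ is the image under $\psi_*$ of the class of $\tau\mapsto \tau P'-P'$ in $H^1(k,E[2])$.

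Next, I would identify these cocycle classes with their Kummer-theoretic descriptions: the connecting map $\delta_E\colon E(k)/2E(k)\to H^1(k,E[2])$ from the Kummer sequence $0\to E[2]\to E\xrightarrow{2} E\to 0$ sends $P$ to the class of $\tau\mapsto \tau P'-P'$, and analogously for $\delta_F$. So the descent condition becomes $\delta_F(Q)=\psi_*\delta_E(P)$ in $H^1(k,F[2])$.

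The third and most delicate step is to match $\delta_E$ with the concrete map $\iota$ landing in $\hat L\subseteq L^*/L^{*2}$, and to identify $\psi_*$ on cohomology with the combinatorial map $\psi^*\colon\hat L'\to\hat L$ coming from pulling back functions on $F[2]\setminus\{O\}$ under $\psi$. Writing $f=\prod(x-e_i)$ over $K$, the Galois module $E[2]$ is the kernel of summation on $\mathbf{F}_2^{\{e_i\}}$; Shapiro's lemma together with the Kummer sequence for $\mu_2$ on each component of $L$ gives an embedding $H^1(k,E[2])\hookrightarrow\hat L$, and a direct cocycle computation—choosing, for each root $e_i$, a square root of $x_P-e_i=(x_{P'}-\alpha_i)(x_{P'}-\alpha_i')$ where $\alpha_i,\alpha_i'$ are the preimages of $e_i$ under duplication—shows that this embedding sends $\delta_E(P)$ to the class of $x_P-X$, i.e.\ to $\iota(P)$. (The case where $P$ is itself a $2$-torsion point requires the auxiliary definition of $\iota$ in the statement, and is handled by a limiting/continuity argument on the relevant components.) The map $\psi$ permutes roots of $f$ and $g$ compatibly, so pulling back functions from $F[2]$ to $E[2]$ along $\psi$ induces exactly $\psi^*\colon\hat L'\to\hat L$ compatible with the cohomological $\psi_*$.

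Combining these, the condition $\delta_F(Q)=\psi_*\delta_E(P)$ translates to $\iota(P)=\psi^*\iota'(Q)$ in $\hat L$, which is the claim. The main obstacle is the third step: setting up the identification of the cohomological connecting map with the explicit $x-T$ map with enough care to handle $2$-torsion points and to verify that the change-of-variable $\psi$ really does correspond to $\psi^*$ under the Shapiro-lemma isomorphism. Once this dictionary is in place, the rest of the argument is formal.
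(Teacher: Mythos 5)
Your argument is essentially correct, but it is worth knowing that the paper does not actually prove this proposition: its entire proof is the single sentence ``This follows immediately from \cite[Proposition~12, p.~338]{HoweLeprevostEtAl2000}.'' What you have written is, in effect, a reconstruction of the proof of that cited result rather than of anything in this paper. Your skeleton is the right one: since $\omega$ is surjective on $K$-points and $(\varphi_*,\varphi'_*)\circ\omega=(2,2)$, every preimage of $(P,Q)$ in $J(K)$ has the form $\omega(P',Q')$ with $2P'=P$ and $2Q'=Q$; Galois-invariance of $\omega(P',Q')$ is the condition $(\tau P'-P',\tau Q'-Q')\in\Gamma$ for all $\tau$, and quantifying over the choices of $(P',Q')$ (which shift the cocycles by coboundaries) turns this into $\psi_*\delta_E(P)=\delta_F(Q)$ in $H^1(k,F[2])$; the passage to $\hat L$ and $\hat L'$ is the classical identification of the Kummer connecting map with the $x-X$ map, together with the injectivity of $H^1(k,E[2])\to L^*/L^{*2}$ coming from the sequence $0\to E[2]\to R_{L/k}\mu_2\to\mu_2\to 0$ and the surjectivity of $\mu_2(L)\to\mu_2(k)$; and the compatibility of the cohomological $\psi_*$ with the algebra map $\psi^*$ follows because $\psi$ respects the Weil pairings (an anti-isometry is an isometry on $2$-torsion), so the embeddings $E[2]\hookrightarrow R_{L/k}\mu_2$ and $F[2]\hookrightarrow R_{L'/k}\mu_2$ intertwine $\psi$ with $(\psi^*)^{-1}$. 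Two small blemishes in your third step, neither fatal: the identity $x_P-e_i=(x_{P'}-\alpha_i)(x_{P'}-\alpha_i')$ is off by the square factor $(2y_{P'})^{-2}$ --- harmless modulo squares, but the cocycle computation requires an honest square root, so the correct statement is $x_P-e_i=\bigl((x_{P'}-\alpha_i)(x_{P'}-\alpha_i')/(2y_{P'})\bigr)^2$; and the $2$-torsion case of $\iota$ is not naturally a ``limiting'' argument --- one computes $\delta_E(P)$ on the components of $L$ where $x_P-X$ is nonzero and uses the norm condition defining $\hat L$ to pin down the remaining component, which is exactly how the paper's definition of $\iota$ at $2$-torsion points is set up. The trade-off between the two routes is clear: the paper's citation keeps the Weil-pairing and Shapiro-lemma bookkeeping out of sight in \cite{HoweLeprevostEtAl2000}, while your version would make the statement self-contained at the cost of carrying out exactly that bookkeeping.
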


\begin{proof}
This follows immediately 
from~\cite[Proposition~12, p.~338]{HoweLeprevostEtAl2000}.
\end{proof}

\section{A small example}
\label{S:example}

Let $E$ and $F$ be the elliptic curves over $\BQ$ defined by $y^2 = f$ and 
$y^2 = g$, respectively, where
\[
f = x^3 + 5 x^2 + 6 x + 1 
\text{\qquad and\qquad}
g = x^3 - 6 x^2 + 5 x - 1.
\]
Let $K$ be the number field defined by the irreducible polynomial~$f$. 
Let $r$ be a root of $f$ in $K$; then $-r^2 - 4 r - 4$ and $r^2 + 3 r - 1$ 
are also roots of~$f$. Set 
\[
\alpha_1 = r, \qquad 
\alpha_2 = -r^2 - 4 r - 4, \qquad 
\alpha_3 = r^2 + 3 r - 1,
\]
and note that if we set $\beta_i = -1/\alpha_i$ then the $\beta$'s are the 
three roots of~$g$.

Let $\psi\colon E[2](K)\to F[2](K)$ be the isomorphism that sends 
$(\alpha_i,0)$ to $(\beta_i,0)$, for $i=1,2,3$. Using the formulas 
from~\cite[Proposition~4, p.~324]{HoweLeprevostEtAl2000}, we see that the
curve $C$ over $\BQ$ defined by $y^2 = 7^8 g(x^2)$ has Jacobian $J$ isomorphic 
to the quotient of $E\times F$ by the graph of~$\psi$. Rescaling~$y$, we find
that $C$ has a model 
\[
y^2 = x^6 - 6 x^4 + 5 x^2 - 1.
\]
The double cover $\varphi\colon C\to E$ is given by
$(x,y)\mapsto (-1/x^2, y/x^3)$, and the double cover $\varphi'\colon C\to F$ 
by $(x,y) \mapsto (x^2,y)$.

The curve $E$ is isomorphic to the curve 196A1 from Cremona's database; its 
Mordell--Weil group is generated by the point $P = (-2,1)$ of infinite order.
The curve $F$ is isomorphic to the curve 784F1 from Cremona's database, and its
Mordell--Weil group is trivial.

Let $\sigma$ be the involution $(x,y)\mapsto(-x,-y)$ of~$C$, so that $\sigma$ 
generates the Galois group of the cover $C\to E$, and let $s$ be the 
corresponding involution of~$J$.  We know from Section~\ref{S:double} that the
endomorphism $u = 1 + s$ of $J$ has connected kernel and has image isomorphic
to~$E$, and that the associated optimal cover $J\to E$ is simply~$\varphi_*$.
We claim that the point $P$ is not in the image under $\varphi_*$ of the 
Mordell--Weil group of~$J$.

We prove this claim by contradiction.  Suppose there were a point $R$ of 
$J(\BQ)$ with $\varphi_*(R) = P$. The only possible image for $R$ in $F(\BQ)$
is the identity element~$O$, so we must have 
$(\varphi_*,\varphi'_*)(R) = (P,O)$. Now we apply Proposition~\ref{P:2torsion}.
We see that the $\BQ$-algebra $L$ from the proposition is simply the field~$K$,
the group $\hat{L}$ is the quotient of the subgroup of elements of $K^*$ with 
square norm by the subgroup $K^{*2}$, and the map
$\iota\colon E(\BQ)/2E(\BQ) \to \hat{L}$ sends the class of a nonzero point
$(x,y)\in E(\BQ)$ to the class in $\hat{L}$ of the element $x-r\in K^*$.  
(Note that $x-r$ does lie in the subgroup of $K^*$ of elements whose norms are
squares, because the norm of $x-r$ is equal to $y^2$.)

Since $(P,O)$ lies in the image of $J(\BQ)$, Proposition~\ref{P:2torsion} says
$\iota(P)$ must be the trivial element of $\hat{L}$; that is, $-2-r$ must be a 
square in~$K$.  But $-2-r$ is \emph{not} a square in $K$; this can be seen, for 
example, by looking modulo~$13$. Therefore $P$ is not in the image of under 
$\varphi_*$ of the Mordell--Weil group of~$J$.

\section{Infinitely many examples}
\label{S:examples}

The specific example given in Section~\ref{S:example} was chosen because the
equations for the curves and the maps worked out to have small integer 
coefficients.  In this section we present a method for producing infinitely 
many examples, without concerning ourselves about the simplicity of the
equations we obtain.

Let $E$ and $F$ be two elliptic curves over $\BQ$ defined by equations 
$y^2 = f$ and $y^2 = g$, respectively, where $f$ and $g$ are monic cubic
polynomials in $\BQ[x]$ that split completely over~$\BQ$. Let $P_1$, $P_2$, 
$P_3$ be the points of order $2$ in $E(\BQ)$ and let $Q_1$, $Q_2$, $Q_3$ be 
the points of order $2$ in $F(\BQ)$.  Let $C$ be the genus-$2$ curve over $\BQ$ 
produced by gluing $E$ and $F$ together along their $2$-torsion subgroups using
the isomorphism $\psi\colon E[2](K)\to F[2](K)$ that takes $P_i$ to $Q_i$, for 
$i=1,2,3$. Let $\varphi\colon C\to E$ and $\varphi'\colon C\to F$ be the 
degree-$2$ maps associated to this data and let $J$ be the Jacobian of~$C$.  
Suppose $P$ is a rational point on~$E$.   We know that $P$ is in the image of 
$J(\BQ)$ under $\varphi_*$ if and only if there is a point $Q$ of $F(\BQ)$ such
that $(P,Q)$ is in the image of $J(\BQ)$ under the map $(\varphi_*,\varphi'_*)$.

Again Proposition~\ref{P:2torsion} tells us whether such a $Q$ exists. In this 
case, because the $2$-torsion points of $E$ and $F$ are all rational, the 
answer takes a slightly different shape than it did in the preceding section.
Let $Z$ be the subgroup of $(\BQ^*/\BQ^{*2})^3$ consisting of those triples 
$(r,s,t)$ whose product is equal to the trivial element of~$\BQ^*/\BQ^{*2}$.  
Then the group $\hat{L}$ from Proposition~\ref{P:2torsion} is isomorphic to~$Z$,
and the isomorphism can be chosen so that the homomorphism $\iota$ sends a 
non-$2$-torsion point $P$ of $E(\BQ)$ to the class in $Z$ of the triple
\[
(x(P) - x(P_1), x(P) - x(P_2), x(P) - x(P_3)).
\]
Likewise, $\hat{L}'$ is isomorphic to $Z$, and the isomorphism can be chosen
so that the homomorphism $\iota'$ sends a non-$2$-torsion point $Q$ of $F(\BQ)$
to the class of 
\[
(x(Q) - x(Q_1), x(Q) - x(Q_2), x(Q) - x(Q_3)).
\]
Under these identifications, the isomorphism $\psi^*$ is nothing other than
the identity on~$Z$.  Thus, Proposition~\ref{P:2torsion} says that a point 
$(P,Q)$ in $(E\times F)(\BQ)$ is in the image of $(\varphi_*,\varphi'_*)$ if
and only if $\iota(P) = \iota'(Q)$.

Suppose we are given an arbitrary elliptic curve $F/\BQ$ with rational points
$Q_1$, $Q_2$, $Q_3$ of order~$2$. We will show that there are infinitely many
geometrically distinct choices for $E/\BQ$ with rational points $P_1$, $P_2$, 
$P_3$ of order~$2$ such that if $\varphi\colon C\to E$ is constructed as 
above, then there is a point of infinite order in $E(\BQ)$ that is not 
contained in the subgroup of $E(\BQ)$ generated by the torsion elements and the
image of $J(\BQ)$ under~$\varphi_*$.

If $z$ is an element of $(\BQ^*/\BQ^{*2})^3$, we say that a prime $p$ 
\emph{occurs in $z$} if one of the components of $z$ has odd valuation at~$p$.
If $E$ is an elliptic curve over $\BQ$ with all of its $2$-torsion rational 
over~$\BQ$, we say that a prime $p$ \emph{occurs in $E(\BQ)$} if it occurs in
some element of~$\iota(E(\BQ))$; note that only finitely many primes occur in 
$E(\BQ)$ because $E(\BQ)$ is a finitely-generated group. Let $\ell_1$ and 
$\ell_2$ be two distinct odd primes that do not occur in $F(\BQ)$. Let $p$ be 
one of the infinitely many odd primes that do not occur in $F(\BQ)$ and that are
congruent to $\ell_1+1$ modulo $\ell_1^2$ and to $\ell_2-1$ modulo~$\ell_2^2$,
and let $E_p$ be the elliptic curve
\[
y^2 = x (x + p+1) (x - p + 1).
\]
Let $P_1$, $P_2$, and $P_3$ be the $2$-torsion points on $E_p$ with 
$x$-coordinates~$0$, $-p-1$, and $p-1$, respectively, and let 
$P = (-1,p)\in E_p(\BQ)$.  We compute that the images of these points in 
$Z \subset (\BQ^*/\BQ^{*2})^3$ are as follows:
\begin{align*}
\iota(P)   &= (-1, p, -p) \\
\iota(P_1) &= (-p^2+1, p+1, -p+1) \\
\iota(P_2) &= (-p-1, 2p(p+1),-2p) \\
\iota(P_3) &= (p-1,2p,2p(p-1)).
\end{align*}
We see that $p$ occurs in $\iota(P)$, that $p$ occurs in $\iota(P+P_1)$, that
$\ell_2$ occurs in $P+P_2$, and that $\ell_1$ occurs in $P+P_3$.

Note that $\iota(P_1)$, $\iota(P_2)$, and $\iota(P_3)$ are nontrivial, because
either $\ell_1$ or $\ell_2$ occurs in each of them. This shows that none of the
points $P_1$, $P_2$, and $P_3$ is the double of a rational point.  Since we
know the possible torsion structures of elliptic curves 
over~$\BQ$~\cite[Theorem~8, p.~35]{Mazur1977}, we see that $E_p$ has torsion
subgroup isomorphic to either $(\BZ/2\BZ)\times (\BZ/2\BZ)$ or
$(\BZ/2\BZ)\times (\BZ/6\BZ)$.  If there is a rational $3$-torsion point $T$
on~$E$, then $\iota(T) = (1,1,1)$, because $T$ is twice~$-T$. Combining this
with what we have already shown, we find that $\iota(P)$ is not contained in
the group generated by $\iota'(F(\BQ))$ and the image under $\iota$ of the
torsion subgroup of $E(\BQ)$.  From this, we see that $P$ is not contained in
the subgroup of $E(\BQ)$ generated by the torsion elements and the image of
$J(\BQ)$ under~$\varphi_*$.

Finally, we note that the $j$-invariant of $E_p$ is given by
\[
j(E_p) = \frac{64 (3 p + 1)^3}{p^2 (p-1)^2 (p+1)^2},
\]
so that, since $p$ is odd, it is the largest prime for which $j(E_p)$ has
negative valuation.  Therefore distinct odd primes $p$ and $q$ give 
geometrically nonisomorphic curves $E_p$ and $E_q$, so there are infinitely 
many curves $E_p$ that we can glue to $F$ as above to get examples showing that
the answer to Schaefer's question is \emph{no}.

\bibliographystyle{hplaindoi} 
\bibliography{optimalMW}

\end{document}